\newtheorem{thm}{Theorem}
\newtheorem{cor}[thm]{Corollary}
\newtheorem{lem}[thm]{Lemma}
\newtheorem{prop}[thm]{Proposition}
\theoremstyle{definition}
\newtheorem*{theorem*}{Theorem}
\theoremstyle{remark}
\numberwithin{equation}{section}
\def\<{\langle}
\def\>{\rangle}
\begin{document}
\title[]{A weighted composition semigroup related to three open problems}
\author{Juan Manzur, Waleed Noor, and Charles F. Santos}%
\address{IMECC, Universidade Estadual de Campinas, Campinas-SP, Brazil.}
\email{$\mathrm{juan\_manzur123@hotmail.com}$ (Juan Manzur)} 
\email{$\mathrm{waleed@unicamp.br}$ (Waleed Noor) Corresponding Author}
\email{$\mathrm{ch.charlesfsantos@gmail.com}$ (Charles F. Santos)}
\begin{abstract} The semi-group of weighted composition operators $(W_n)_{n\geq 1}$ where 
	\[
	W_nf(z)=(1+z+\ldots+z^{n-1})f(z^n)
	\]
	on the classical Hardy-Hilbert space $H^2$ of the open unit disk is related to the Riemann Hypothesis (RH) (see \cite{Waleed}). The semigroup $(W_n)_{n\geq 1}$ is also closely related to the Invariant Subspace Problem (ISP) and the Periodic Dilation Completeness Problem (PDCP). We obtain results on cyclic vectors, spectra, invariant and reducing  subspaces. In particular, we show that several basic questions related to the semigroup $(W_n)_{n\geq 1}$ are equivalent to the RH and provide generalizations of the Báez-Duarte criterion for the RH (see \cite{Baez-Duarte}).
\end{abstract}

{\subjclass[2010]{Primary; Secondary}}
\keywords{Riemann hypothesis, Hardy space, Periodic Dilation Completeness Problem, Invariant subspaces, Cyclic vectors.}
\maketitle{}

\section{Introduction}
The Riemann Hypothesis (RH) is considered to be the most important unsolved problem in mathematics. In 1950, Nyman and Beurling (see \cite{Beurling, Nyman}) gave a functional analytic reformulation for the RH. They proved that the RH holds if and only if the function $\chi_{(0,1)}$ belongs to the closed linear span of $\{ f_{\lambda}: \lambda\in [0,1]\}$ in $L^{2}(0,1)$, where $f_{\lambda}(x)=\{\lambda/x\rbrace-\lambda\lbrace 1/x\}$. Here $\lbrace x\rbrace$ denotes the fractional part of a real number $x$. In 2003, Báez-Duarte \cite{Baez-Duarte} proved that the uncountable family $\lbrace f_{\lambda}: \lambda\in [0,1]\rbrace$ may be replaced by  the sequence $\lbrace f_{1/k}: k\geq1\rbrace$. Recently, Noor \cite{Waleed} gave an $H^{2}$ version of the Báez-Duarte Theorem.

\begin{thm}\label{waleed}(See \cite[Theorem 6]{Waleed})
For each $k\geq2$, define 
\begin{align*}
h_{k}(z)=\dfrac{1}{1-z}\log\left(\dfrac{1+z+\dots+z^{k-1}}{k}\right)
\end{align*}
and let $\mathcal{N}:=\mathrm{span}\{h_k:k\geq 2\}$.
Then the RH holds if and only if the constant $1$ belongs to the closure of $\mathcal{N}$ in $H^{2}$.
\end{thm}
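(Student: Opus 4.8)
The plan is to reduce the statement to the Báez-Duarte form of the Nyman--Beurling criterion recalled above --- RH holds iff $\mathbf 1$ lies in the $L^2(0,1)$-closure of $\mathrm{span}\{f_{1/k}:k\ge 2\}$, where $f_{1/k}(x)=\{1/(kx)\}-\tfrac1k\{1/x\}$ --- and to transplant that membership question into $H^2$ by means of the Mellin transform. The map $f\mapsto \tilde f$, $\tilde f(s)=\int_0^1 f(x)x^{s-1}\,dx$, is, up to a multiplicative constant, a unitary of $L^2(0,1)$ onto the Hardy space of the half-plane $\{\operatorname{Re}(s)>1/2\}$, which a Cayley transform identifies unitarily with $H^2$. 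The first step is the elementary computation $\widetilde{f_{1/k}}(s)=\dfrac{\zeta(s)}{ks}\bigl(1-k^{1-s}\bigr)$ and $\widetilde{\mathbf 1}(s)=1/s$, both following from $\int_0^1\{1/x\}x^{s-1}\,dx=-\zeta(s)/s+1/(s-1)$ and a change of variables; this recasts the Báez-Duarte criterion as a membership statement for the constant in a Hardy space on the half-plane.

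The crux is to identify the image of this subspace under the Cayley transform with $\overline{\mathcal N}$, the distinguished vector staying the constant. Writing $1+z+\dots+z^{k-1}=W_k\mathbf 1=(1-z^k)/(1-z)$ gives $h_k(z)=\frac{1}{1-z}\bigl[\log(1-z^k)-\log(1-z)-\log k\bigr]$, and I would pin down the natural candidate approximants to $\mathbf 1$: using $\log(1-z^k)=-\sum_{m\ge1}z^{km}/m$, the divisor identity $\sum_{d\mid N}\mu(d)=[N=1]$, and the arithmetic facts $\sum_{k\ge1}\mu(k)/k=0$ and $\sum_{k\ge1}\mu(k)\log k/k=-1$, a short bookkeeping of Taylor coefficients shows that $\sum_{k=2}^{M}\tfrac{\mu(k)}{k}\bigl[\log(1-z^k)-\log(1-z)-\log k\bigr]\to 1-z$ coefficientwise, hence $\sum_{k=2}^{M}\tfrac{\mu(k)}{k}h_k\to\mathbf 1$ coefficientwise as $M\to\infty$. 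Thus the forward implication amounts to upgrading this coefficientwise convergence to convergence in the norm of $H^2$, and the size of the $N$-th Taylor coefficient of the remainder $\mathbf 1-\sum_{k\le M}\tfrac{\mu(k)}{k}h_k$ is governed by the Mertens sum $M(x)=\sum_{n\le x}\mu(n)$; since the bound $M(x)=O(x^{1/2+\varepsilon})$ is equivalent to RH, this closes that half. For the converse one either transports, through the same Mellin dictionary, an element of the Nyman--Beurling space annihilating $\mathbf 1$, or dualises the identity $\<z^n,h_k\>=H_n-H_{\lfloor n/k\rfloor}-\log k$ to manufacture, from a zero of $\zeta$ off the critical line, an explicit element of $\mathcal N^{\perp}$ with nonzero zeroth coefficient.

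The step I expect to be the main obstacle is precisely this matching of the two subspaces: the functions $h_k$ carry a logarithm and no $\zeta$-factor, whereas the Mellin transforms $\zeta(s)(1-k^{1-s})/(ks)$ carry $\zeta$ and no logarithm, so the correspondence cannot be purely formal. Reconciling them calls for either a multiplier/closed-span argument --- that dividing out the common factor $\zeta(s)/s$ leaves the closed span in question unchanged, the logarithms then arising through $\log k=-\tfrac{d}{ds}k^{-s}\big|_{s=0}$ when one recombines the family $\{1-k^{1-s}\}_k$ --- or a uniform $H^2$-estimate of the above remainder, arranged so that its decay is visibly a Mertens-type bound equivalent to RH. Once this matching is in place, the remaining ingredients --- the Mellin--Plancherel isometry, the Cayley transform, and the coefficient bookkeeping --- are routine.
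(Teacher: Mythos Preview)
This theorem is not proved in the paper under review: it is quoted verbatim from \cite[Theorem~6]{Waleed}, so there is no in-paper argument to set your proposal against. What can be assessed is whether your outline is a viable route and how it compares with the argument in \cite{Waleed}.

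Your plan --- transplant the B\'aez-Duarte criterion from $L^2(0,1)$ to a half-plane Hardy space via Mellin--Plancherel, pass to $H^2$ by a Cayley map, and then identify the transplanted subspace with $\overline{\mathcal N}$ --- is reasonable, and the Mellin identities you record, as well as the coefficient formula $\<z^N,h_k\>=H_N-H_{\lfloor N/k\rfloor}-\log k$, are correct. The difficulty is exactly the one you flag: the Cayley image of $\mathrm{span}\{\zeta(s)(1-k^{1-s})/(ks):k\ge2\}$ is \emph{not} $\overline{\mathcal N}$ under any naive identification, because the generators carry a $\zeta$-factor while the $h_k$ carry logarithms. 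Your suggested fix, ``divide out the common factor $\zeta(s)/s$'', is precisely where the zeros of $\zeta$ intervene and requires a genuine multiplier/density argument that you have not supplied; likewise, the M\"obius-approximation paragraph gives only coefficientwise convergence and a heuristic link to Mertens sums, without the uniform $H^2$ control of the tail that is the actual content of the implication. So as a proof the proposal has a real gap, and it is the gap you yourself name.

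For comparison, the argument in \cite{Waleed} bypasses the Mellin--Cayley detour. It builds an explicit unitary between $L^2(0,1)$ (or the associated sequence model) and $H^2$ --- essentially via the map $f\mapsto(1-z)f$ that also underlies the semiconjugacy \eqref{semiconjugate} --- under which the B\'aez-Duarte generators $f_{1/k}$ are sent \emph{directly} to the $h_k$ and the constant $\mathbf 1$ to the constant $1$. The equivalence with RH is then inherited verbatim from B\'aez-Duarte, with no multiplier argument and no $\zeta$-factor to remove. In short: your roadmap is sensible, but the step you call ``the main obstacle'' is the whole theorem, and \cite{Waleed} circumvents it by choosing an isometry tailored so that the generators match on the nose.
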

In \cite{Waleed}, a semigroup of weighted composition operators $\mathcal{W}=(W_n)_{n\in\mathbb{N}}$ on $H^{2}$ was introduced, where 
\begin{equation}\label{W_n}
W_nf(z)=(1+z+\ldots+z^{n-1})f(z^n)=\frac{1-z^n}{1-z}f(z^n).
\end{equation}
Each $W_n$ is bounded on $H^2$, $W_1=I$ and $W_mW_n=W_{mn}$ for each $m,n\geq 1$. The connection of $\mathcal{W}$ to the RH stems from the fact that the linear manifold $\mathcal{N}$ is invariant under $\mathcal{W}$, that is, $W_n(\mathcal{N})\subset\mathcal{N}$ for all $n\geq 2$ (see \cite[page 249]{Waleed}). A vector $f\in H^2$ is called a \emph{cyclic vector} for an operator semigroup $\{S_n:n\geq 1\}$ if $\mathrm{span}\{S_nf:n\geq 1\}$ is dense in $H^2$. Since the constant $1$ is a cyclic vector for $\mathcal{W}$, the following generalization of Theorem \ref{waleed} was obtained.
\begin{thm}\label{Waleedsemigroup}(See \cite[Theorem 8]{Waleed})
The following statements are equivalent
\begin{enumerate}
	\item The RH holds true,
	\item the closure of $\mathcal{N}$ contains a cyclic vector for $\mathcal{W}$, 
	\item $\mathcal{N}$ is dense in $H^2$.
\end{enumerate}	
\end{thm}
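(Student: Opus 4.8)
The plan is to derive all three equivalences from Theorem~\ref{waleed} together with two soft observations: that the closure $\overline{\mathcal{N}}$ is again invariant under $\mathcal{W}$, and that the constant $1$ is a cyclic vector for $\mathcal{W}$ (both already noted in the discussion above, the latter because $W_n1=1+z+\dots+z^{n-1}$ and these partial geometric sums span the polynomials, which are dense in $H^2$). The invariance of $\overline{\mathcal{N}}$ is the only point to spell out: since each $W_n$ is bounded on $H^2$, if $f\in\overline{\mathcal{N}}$ and $f_j\in\mathcal{N}$ with $f_j\to f$, then $W_nf_j\in\mathcal{N}$ and $W_nf_j\to W_nf$, so $W_nf\in\overline{\mathcal{N}}$. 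Thus $\overline{\mathcal{N}}$ is a closed subspace invariant under the semigroup $\mathcal{W}$.

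First I would prove $(1)\Rightarrow(3)$. Assuming the RH, Theorem~\ref{waleed} gives $1\in\overline{\mathcal{N}}$. Because $\overline{\mathcal{N}}$ is closed and $\mathcal{W}$-invariant, it contains $\mathrm{span}\{W_n1:n\geq 1\}$ and hence the closure of this span, which is all of $H^2$ by cyclicity of $1$. Therefore $\overline{\mathcal{N}}=H^2$, i.e.\ $\mathcal{N}$ is dense.

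The implication $(3)\Rightarrow(2)$ is immediate: if $\mathcal{N}$ is dense then $\overline{\mathcal{N}}=H^2\ni 1$, and $1$ is a cyclic vector for $\mathcal{W}$, so $\overline{\mathcal{N}}$ contains a cyclic vector. For $(2)\Rightarrow(1)$, suppose $f\in\overline{\mathcal{N}}$ is cyclic for $\mathcal{W}$. Then $\mathrm{span}\{W_nf:n\geq 1\}$ is dense in $H^2$; but this span is contained in $\overline{\mathcal{N}}$ by the invariance established above, forcing $\overline{\mathcal{N}}=H^2$. In particular $1\in\overline{\mathcal{N}}$, and Theorem~\ref{waleed} yields the RH. This closes the cycle $(1)\Rightarrow(3)\Rightarrow(2)\Rightarrow(1)$.

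I do not expect a genuine obstacle here: the entire arithmetic content is already packaged in Theorem~\ref{waleed}, and what remains is the elementary principle that a closed $\mathcal{W}$-invariant subspace containing any cyclic vector for $\mathcal{W}$ must be the whole space. The only care needed is in the (routine) verification that closure preserves $\mathcal{W}$-invariance and in invoking correctly that cyclicity of the constant $1$ for $\mathcal{W}$ is exactly the density of polynomials in $H^2$.
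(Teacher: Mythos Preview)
Your argument is correct and follows exactly the approach indicated in the paper: the paper itself does not give a detailed proof but cites \cite[Theorem~8]{Waleed}, noting just before the statement that ``the constant $1$ is a cyclic vector for $\mathcal{W}$'' and that $\mathcal{N}$ is $\mathcal{W}$-invariant, which are precisely the two soft observations you use on top of Theorem~\ref{waleed}. There is nothing to add.
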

The semigroup $\mathcal{W}$ is also related to an open problem in harmonic analysis known as the \emph{Periodic Dilation Completeness Problem} (PDCP). The PDCP asks which $2$-periodic functions $\phi$ on $(0,\infty)$ have the property that 
\[
\mathrm{span}\{\phi(nx):n\geq 1\}
\]
is dense in $L^2(0,1)$. Such $\phi$ are called \emph{PDCP} functions. This difficult open problem was first considered independently by Wintner \cite{Wintner} and Beurling \cite{Beurling 2}. See \cite{Hedenmalm-Seip}, \cite{Hui-Guo} and \cite{Nikolski} for beautiful modern treatments of the PDCP. The PDCP has an equivalent reformulation in $H^2$. That is, characterize the cyclic vectors for the semigroup $\mathcal{T}=(T_n)_{n\geq 1}$ on 
\[
H^{2}_{0}:=H^{2}\ominus\mathbb{C}=\{f\in H^2: f(0)=0\}
\]
where $T_{n}f(z)=f(z^{n})$  (see \cite{Nikolski}). The semigroups $\mathcal{T}$ and $\mathcal{W}$ are \emph{semiconjugate} in the sense that 
 \begin{equation}\label{semiconjugate}
 T_{n}(I-S)=(I-S)W_{n}
 \end{equation}
 where $S$ is the unilateral shift on $H^{2}$, and $I-S$ is injective with dense range (\cite{Waleed}). The relation \eqref{semiconjugate} implies that the cyclic vectors for $\mathcal{W}$ are properly embedded into the cyclic vectors for $\mathcal{T}$, and therefore into the collection of all PDCP functions (see \cite[Theorem 13]{Waleed}). 
 
 The third open problem that the semigroup $\mathcal{W}$ is related to is the \emph{Invariant Subspace Problem} (ISP), which asks: Does every bounded linear operator $T$ on a separable Hilbert space $\mathcal{H}$ have a nontrivial closed invariant subspace? An invariant subspace of $T$ is a subspace $M\subset\mathcal{H}$ such that $T(M)\subseteq M$. The recent monograph by Chalendar and Partington \cite{Chalendar Partington book} is a reference for some modern approaches to the ISP. Rota \cite{Rota} demonstrated the existence of operators that have an invariant subspace structure so rich that they could model \emph{every} Hilbert space operator. \\ \\
 $\mathbf{Definition.}$
 \emph{Let $\mathcal{\mathcal{H}}$ be a Hilbert space and $U$ a bounded linear operator on $\mathcal{H}$. Then $U$ is said to be universal for $\mathcal{H}$, if for any bounded linear operator $T$ on $\mathcal{H}$ there exists a constant $\alpha\neq 0$ and an invariant subspace $\mathcal{M}$ for $U$ such that the restriction $U|_\mathcal{M}$ is similar to $\alpha T$. } \\ 
 
So if $U$ is universal for a separable, infinite dimensional Hilbert space $\mathcal{H}$, then the ISP is equivalent to the assertion that every \emph{minimal} invariant subspace for $U$ is one dimensional. The main tool for identifying universal operators has been the following criterion of Caradus \cite{Caradus}. \\ \\
 $\mathbf{The \ Caradus \  Criterion}$ (see \cite{Caradus}.) \emph{Let $U$ be a bounded linear operator on $\mathcal{H}$. If $\mathrm{ker}(U)$ is infinite dimensional and $U$ is surjective, then $U$ is universal for $\mathcal{H}$. } \\
 
The plan of the paper is the following. After a section of preliminaries where we introduce the Hardy-Hilbert space $H^2$ and local Dirichlet spaces $\mathcal{D}_{\delta_\zeta}$, in Section 3 we begin the study of the semigroup of weighted composition operators $(W_n)_{n\in\mathbb{N}}$. We first state without proof an adjoint fomula for $W_n$ for each $n\geq 2$ (Lemma \ref{W_n^*}). As a consequence $W_n/\sqrt{n}$ is a shift of infinite multiplicity, and the adjoints $W_n^*$ for $n\geq 2$ are universal in the sense of Rota (Proposition \ref{Wn shift}). In Section 4, we first show that the shift-invariance of the closure of $\mathcal{N}$ is equivalent to the RH (Proposition \ref{Beurling RH}). We then study the lattice $\mathrm{Lat}(\mathcal{W})$ of common invariant subspaces of the $W_n$. In particular, the closure of the subspace $\mathcal{M}:=\mathrm{span}\{h_k-h_\ell:k,\ell\geq 2\}$ is shown to be a maximal invariant subspace if and only if the RH holds (see Theorem \ref{4}). In Section 5, we ask whether the closure of $\mathcal{N}$ is a reducing subspace for any $W_n$  and show that this question is also equivalent to the RH (Theorem \ref{newreform}). In Section 6, we prove that polynomials $z^m+\ldots+z-\lambda$ for $|\lambda+1|>\sqrt{m+1}$ are cyclic vectors for $(W_n)_{n\in\mathbb{N}}$ (Theorem \ref{Cyclic examples}) and end by generalizing Báez-Duarte's theorem (Theorem \ref{GeneralizatioBD2} and Theorem \ref{BD general H2}).

\section{Preliminaries}
\subsection{The Hardy-Hilbert space} We denote by $\mathbb{D}$ and $\mathbb{T}$ the open unit disk and the unit circle respectively. Let $H^{2}$ be the Hilbert space of analytic functions $f(z)=\sum_{n=0}^\infty \hat{f}(n)z^n$ and $g(z)=\sum_{n=0}^\infty \hat{g}(n)z^n$ defined on $\mathbb{D}$ for which the inner product is given by
\begin{equation}\label{norm H}
\langle f,g\rangle:=\sum_{n=0}^\infty \hat{f}(n)\overline{\hat{g}(n)}.
\end{equation}
There also exists an area integral form of the corresponding $H^{2}$-norm given by
\begin{equation}\label{integral norm H}
||f||_{H^2}^{2}=\sup_{0\leq r<1}\frac{1}{2\pi}\int_0^{2\pi}|f(re^{i\theta})|^2d\theta,
\end{equation}
where $(\hat{f}(n))_{n\in\mathbb{N}}$ and $(\hat{g}(n))_{n\in\mathbb{N}}$ are the  Maclaurin coefficients for $f$ and $g$ respectively. 
\subsection{Local Dirichlet spaces}
 Let $\zeta\in\mathbb{T}$ and define the local Dirichlet space $\mathcal{D}_{\delta_{\zeta}}$ at $\zeta$ as those $f\in H^{2}$ that satisfy
\begin{equation}\label{Dirichlet integral}
\mathcal{D}_{\zeta}(f)=\int_{\mathbb{D}} \vert f'(z)\vert^{2}\dfrac{1-\vert z\vert^{2}}{\vert z-\delta\vert^{2}}dA(z)<\infty,
\end{equation}
where $dA$ is the normalized area measure of $\mathbb{D}$. The recent monograph \cite{Dirichlet book} contains a comprehensive treatment of local Dirichlet spaces. These subspaces consist of functions of the form 
\begin{equation}\label{Functions in Local D space}
\mathcal{D}_{\delta_{\zeta}}=\lbrace a+(z-\zeta)g(z): g\in H^{2}\ \textup{and}\ a\in\mathbb{C}\rbrace
\end{equation}
(\cite[Theorem 7.2.1]{Dirichlet book}) and contain all functions analytic on a neighborhood of $\overline{\mathbb{D}}$. An alternate way to verify the integral condition \eqref{Dirichlet integral} is as follows. Let $f$ be holomorphic on $\mathbb{D}$ with $f(z)=\sum_{n=0}^\infty a_n z^n$ and $\zeta\in\mathbb{T}$. If both 
\begin{equation}\label{Dirichlet integral series}
\sum_{n=0}^\infty a_n\zeta^n<\infty \ \ \ \mathrm{and} \ \ \ \sum_{k=0}^\infty|\sum_{n=k+1}^\infty a_n\zeta^n|^2<\infty
\end{equation}
 then $\mathcal{D}_\zeta(f)<\infty$ (see \cite[page 115]{Dirichlet book}). In \cite[Theorem 12]{Waleed} it was shown that
\begin{equation}\label{Nperp intesection D}
	\mathcal{N}^{\perp}\cap \mathcal{D}_{\delta_{1}}=\lbrace 0\rbrace.
	\end{equation}
 Keep in mind that $\mathcal{N}^\perp=\{0\}$ is equivalent to the RH (Theorem \ref{Waleedsemigroup}).

\section{The semigroup $\mathcal{W}=(W_n)_{n\in\mathbb{N}}$}

We first establish an adjoint formula for the elements of the semigroup $\mathcal{W}$ which will be used throughout the rest of this work. We omit the straightforward proof. 
\begin{lem}\label{W_n^*} For any $f\in H^2$ and $n\in\mathbb{N}$, the adjoint of $W_n$ is given by
\begin{align*}
W_{n}^{*}f(z)=\sum_{k=0}^\infty B_n(k)z^k,
\end{align*}
where $(\hat{f}(k))_{k\in\mathbb{N}}$ are the  Maclaurin coefficients of $f$ and \[B_n(k)=\hat{f}(nk)+\hat{f}(nk+1)+\dots+\hat{f}(nk+n-1)\]
is the sum of the k-th block of $n$ consecutive coefficients.
\end{lem}

As a consequence of Lemma \ref{W_n^*} it follows that
\begin{equation}\label{Wn isometry}
W_n^*W_n=nI
\end{equation}
for all $n\in\mathbb{N}$, and therefore that $(W_n/\sqrt{n})_{n\in\mathbb{N}}$ is a semigroup of \emph{isometries}. Recall that an isometry $S$ on a Hilbert space $\mathcal{H}$ is called a \emph{shift} if $\Vert S^{*n}f\Vert\rightarrow0$ for all $f\in\mathcal{H}$ and its \emph{multiplicity} is defined as the dimension of $\mathrm{Ker(S^*)}$.
\begin{prop}\label{Wn shift} Each $W_n/\sqrt{n}$ for $n\geq 2$ is a shift with infinite multiplicity and hence $W_n^*$ is universal in the sense of Rota. 
	\end{prop}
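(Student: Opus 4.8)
The plan is to verify the two conditions in the definition of a shift for the isometry $V_n:=W_n/\sqrt{n}$, namely that $\|V_n^{*k}f\|\to 0$ for every $f\in H^2$, and that $\dim\mathrm{Ker}(V_n^*)=\dim\mathrm{Ker}(W_n^*)$ is infinite. Both will follow quite directly from the adjoint formula in Theorem \ref{W_n^*}.

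For the shift property, fix $f(z)=\sum_{j\geq 0}\hat f(j)z^j\in H^2$. By iterating the formula in Theorem \ref{W_n^*}, the $m$-th coefficient of $W_n^{*k}f$ is a sum of $n^k$ consecutive Maclaurin coefficients of $f$, specifically those indexed from $n^k m$ to $n^k m + n^k - 1$. Hence
\begin{align*}
\|W_n^{*k}f\|^2=\sum_{m=0}^\infty\Bigl|\sum_{j=n^k m}^{n^k m+n^k-1}\hat f(j)\Bigr|^2
\leq \sum_{m=0}^\infty n^k\sum_{j=n^k m}^{n^k m+n^k-1}|\hat f(j)|^2
= n^k\|f\|^2,
\end{align*}
using Cauchy--Schwarz on each block. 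Therefore $\|V_n^{*k}f\|^2=n^{-k}\|W_n^{*k}f\|^2\leq\|f\|^2$, which only reconfirms that $V_n$ is a contraction and is not yet enough. The clean way is to first prove the claim for $f$ a polynomial: if $\deg f < N$ and $n^k \ge N$, then for $m\ge 1$ the block $[n^km, n^km+n^k-1]$ lies beyond $N$, so only the $m=0$ term survives and $W_n^{*k}f(z)$ is the constant $\sum_{j=0}^{N-1}\hat f(j)$; then $\|V_n^{*k}f\| = n^{-k/2}\bigl|\sum_{j<N}\hat f(j)\bigr|\to 0$. For general $f\in H^2$, approximate by a polynomial $p$ with $\|f-p\|<\eps$ and use that each $V_n^{*k}$ is a contraction: $\|V_n^{*k}f\|\le\|V_n^{*k}(f-p)\|+\|V_n^{*k}p\|<\eps+\|V_n^{*k}p\|$, and the last term tends to $0$. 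This gives $\limsup_k\|V_n^{*k}f\|\le\eps$ for every $\eps>0$, hence $\|V_n^{*k}f\|\to 0$.

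For the multiplicity, by Theorem \ref{W_n^*} a function $g(z)=\sum_{k\geq 0}\hat g(k)z^k$ lies in $\mathrm{Ker}(W_n^*)$ precisely when every block sum $\hat g(nk)+\hat g(nk+1)+\cdots+\hat g(nk+n-1)$ vanishes. When $n\geq 2$ each such block has length at least $2$, so for every $k\geq 0$ there are nontrivial solutions; concretely, for each $k\ge 0$ the vector $e_{nk}-e_{nk+1}$ (difference of monomials $z^{nk}-z^{nk+1}$) lies in $\mathrm{Ker}(W_n^*)$, and these are linearly independent for distinct $k$. Thus $\mathrm{Ker}(W_n^*)$ is infinite dimensional, and since $V_n^*=W_n^*/\sqrt n$ has the same kernel, $V_n$ is a shift of infinite multiplicity.

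The only mild subtlety, and the one place to be careful, is the passage from polynomials to general $f\in H^2$ in the shift condition: one must not try to bound $\|V_n^{*k}f\|$ directly (the naive estimate is useless), but instead combine the exact computation on a dense set with the uniform contractivity of the family $(V_n^{*k})_k$. Everything else is immediate from Theorem \ref{W_n^*}.
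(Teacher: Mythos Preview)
Your proof is correct, but it follows a different route from the paper. For the shift property, the paper does not compute anything: it simply invokes a theorem of Matache \cite{Matache weighted} characterising when an isometric weighted composition operator $T_{\psi,\phi}$ on $H^2$ (with $\phi$ having an interior fixed point $p$) is a shift, namely when $|\psi(p)|<1$; here $\psi(z)=(1+z+\cdots+z^{n-1})/\sqrt{n}$, $\phi(z)=z^n$, $p=0$, and $|\psi(0)|=1/\sqrt{n}<1$. Your argument is instead fully self-contained: you use $W_n^{*k}=W_{n^k}^*$ and the adjoint formula of Theorem \ref{W_n^*} to see that $W_n^{*k}$ maps any polynomial to a constant once $n^k$ exceeds its degree, so $\|V_n^{*k}p\|\to 0$ on polynomials, and then pass to general $f$ via density and contractivity. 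This buys independence from the Matache result at the cost of a short explicit computation. For the multiplicity, both proofs exhibit an orthogonal sequence in $\mathrm{Ker}(W_n^*)$; the paper uses $z^{nk}+\cdots+z^{nk+n-2}-(n-1)z^{nk+n-1}$, while your choice $z^{nk}-z^{nk+1}$ is a little simpler but serves the same purpose.
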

\begin{proof} Suppose $T_{\psi,\phi}$ is a weighted composition operator on $H^2$ defined by
	\[
	T_{\psi,\phi}f=\psi (f\circ\phi)
	\] 
	for $f\in H^2$, where $\psi$ is an analytic function on $\mathbb{D}$ and $\phi$ an analytic selfmap of $\mathbb{D}$ with a fixed point $p\in\mathbb{D}$. Then Matache \cite[Theorem 8]{Matache weighted} proved that $T_{\psi,\phi}$ is a shift operator if and only if $|\psi(p)|<1$. Therefore with $\psi(z)=(1+z+\ldots+z^{n-1})/\sqrt{n}$, $\phi(z)=z^n$ and $p=0$, we get $W_n/\sqrt{n}$ is a shift for all $n\in\mathbb{N}$. It is easy to verify that $\mathrm{Ker}(W_{n}^{*})$ contains the functions
	\begin{align*}
	f_k(z)= z^{nk}+z^{nk+1}+\dots+z^{nk+n-2}-(n-1)z^{nk+n-1}
	\end{align*}
	for all $k\in\mathbb{N}$ by Lemma \ref{W_n^*}. Since $(f_k)_{k\in\mathbb{N}}$ is an orthogonal sequence, it follows that $\mathrm{Ker}(W_{n}^{*})$ is infinte-dimensional and hence $W_n/\sqrt{n}$ has infinite multiplicity. The surjectivity of $W_n^*$ follows from \eqref{Wn isometry} and hence each $W_n^*$ is universal in the sense of Rota by the Caradus Criterion.
	\end{proof}

Let $\sigma(T)$  ($\sigma_o(T)$) denote the spectrum (point spectrum) of an operator $T$, and $\overline{B}(a,r)$ the closed ball of radius $r$ centered at $a\in\mathbb{C}$. So we immediately get the following by Proposition \ref{Wn shift}.
\begin{cor} $\sigma(W_{n}^{*})=\sigma(W_{n})=\overline{B}(0,\sqrt{n})$ for all $n\geq 2$. Also each $\sigma_o(W_{n})=\emptyset$ and hence $W_n$ has no finite dimensional invariant subspaces.
\end{cor}
 A proper invariant subspace $E$ for an operator $T$ is \emph{maximal} if it is not contained in any other proper invariant subspace for $T$. In this case $E^\perp$ is a minimal invariant subspace for $T^*$. Hence the ISP may be reformulated in terms of $W_n$. 
\begin{cor}\label{ISP maximal Wn} For any $n\geq 2$, every maximal invariant subspace for $W_n$ has codimension one  if and only if the ISP has a positive solution. 
\end{cor}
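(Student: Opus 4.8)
The plan is to dualize and invoke the universality of $W_n^*$. By Corollary \ref{W_n* universal} the operator $W_n^*$ is universal for $H^2$, and, as explained in the introduction, this means precisely that the ISP is equivalent to the assertion that every minimal invariant subspace of $W_n^*$ is one-dimensional. So the whole statement reduces to translating ``every maximal invariant subspace of $W_n$ has codimension one'' into ``every minimal invariant subspace of $W_n^*$ is one-dimensional.''

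The translation is the standard orthogonal-complement anti-isomorphism of invariant-subspace lattices. First I would record that for a bounded operator $A$ on $H^2$ and a closed subspace $E$ one has $AE\subseteq E$ if and only if $A^*E^\perp\subseteq E^\perp$; hence $E\mapsto E^\perp$ is an inclusion-reversing bijection from closed $W_n$-invariant subspaces onto closed $W_n^*$-invariant subspaces, it interchanges $\{0\}$ and $H^2$, and it carries $\codim E$ to $\dim E^\perp$. From this I would deduce that $E$ is a maximal proper invariant subspace of $W_n$ exactly when $E^\perp$ is a minimal nonzero invariant subspace of $W_n^*$, and that under this pairing $\codim E=\dim E^\perp$. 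Therefore ``every maximal invariant subspace of $W_n$ has codimension one'' holds if and only if ``every minimal invariant subspace of $W_n^*$ is one-dimensional.''

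For completeness I would also spell out why the latter is equivalent to the ISP (this is Rota's universality principle, but it is short). If the ISP holds and $\mathcal M$ is a minimal invariant subspace of $W_n^*$ with $\dim\mathcal M\ge 2$, then $W_n^*|_{\mathcal M}$ has a nontrivial closed invariant subspace --- from an eigenvector when $\mathcal M$ is finite-dimensional, from the ISP applied to the separable space $\mathcal M$ when $\mathcal M$ is infinite-dimensional --- and this subspace is also $W_n^*$-invariant, contradicting minimality; so $\dim\mathcal M=1$. Conversely, assume every minimal invariant subspace of $W_n^*$ is one-dimensional and let $T$ act on a separable infinite-dimensional Hilbert space, which we identify with $H^2$; suppose toward a contradiction that $T$ has no nontrivial invariant subspace. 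Universality gives $\alpha\ne 0$ and a closed $W_n^*$-invariant subspace $\mathcal M$ with $W_n^*|_{\mathcal M}$ similar to $\alpha T$. Since $\alpha\ne 0$ and similarity preserves the invariant-subspace lattice, $W_n^*|_{\mathcal M}$ has no nontrivial invariant subspace; as every closed $W_n^*$-invariant subspace contained in $\mathcal M$ is $W_n^*|_{\mathcal M}$-invariant, $\mathcal M$ is then a minimal invariant subspace of $W_n^*$, hence one-dimensional. But $\mathcal M$ is isomorphic to the infinite-dimensional domain of $T$, a contradiction. Thus every operator on a separable infinite-dimensional Hilbert space has a nontrivial invariant subspace, and the finite-dimensional case being trivial, the ISP holds.

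I do not expect a genuine obstacle: the only delicate point is the bookkeeping of the words ``proper,'' ``maximal,'' ``minimal,'' and ``nonzero,'' so that the anti-isomorphism lines the maximal proper invariant subspaces of $W_n$ up with the minimal nonzero invariant subspaces of $W_n^*$; the substantive input, namely the universality of $W_n^*$, is already in hand. It is worth remarking that the two equivalent conditions are not vacuous: $H^2_0=\{f\in H^2: f(0)=0\}$ is a codimension-one invariant subspace of $W_n$, and dually $\mathbb C\cdot 1$ is a one-dimensional invariant subspace of $W_n^*$, since $W_n^*1=1$ by Theorem \ref{W_n^*}.
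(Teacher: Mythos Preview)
Your proposal is correct and follows exactly the route the paper indicates: the paper states the corollary without a formal proof, relying on the sentence preceding it (that $E^\perp$ is a minimal invariant subspace for $T^*$ whenever $E$ is maximal for $T$) together with the universality of $W_n^*$ from Corollary~\ref{W_n* universal}. You have simply written out in full the duality $E\mapsto E^\perp$ and the standard Rota argument that the paper takes for granted, so there is nothing to add.
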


Therefore studying the invariant subspaces for $\mathcal{W}$, which we do in the next section, may shed light on both the RH and the ISP.

\section{Invariant subspaces of $\mathcal{W}$ }\label{Properties}
We start by noticing that for $n\geq 1$
\[
W_n=(I+S+\ldots+S^{n-1})T_n
\]
where $T_{n}f(z)=f(z^{n})$ and $S$ is the shift operator on $H^2$ (see Introduction). Since the closure of $\mathcal{N}=\mathrm{span}\{h_k:k\geq 2\}$ is an invariant subspace for each $W_n$, one may also ask whether the closure of $\mathcal{N}$ is a shift-invariant subspace. The complete characterization of shift-invariant subspaces in $H^2$ was provided by Beurling \cite{Beuling shift invariant} and is a landmark results in operator-related function theory. The fascinating answer follows from the fact that $(I-S)\mathcal{N}$ is dense in $H^2$ (see \cite[Theorem 9]{Waleed}).
\begin{prop}\label{Beurling RH} The closure of $\mathcal{N}$ is shift-invariant if and only if the RH holds.
	\end{prop}
\begin{proof} If the RH holds then the closure of $\mathcal{N}$ is $H^2$ which is trivially shift-invariant. The converse follows from the relation $$(I-S)\mathcal{N}\subset (I-S) \overline{\mathcal{N}}\subset \overline{\mathcal{N}}$$ and hence $\overline{\mathcal{N}}=H^2$ by the density of $(I-S)\mathcal{N}$ which implies RH by Theorem \ref{Waleedsemigroup}.
	\end{proof}

The main objective of this section is to consider a special sublattice of the lattice $\mathrm{Lat}(\mathcal{W})$ of common invariant subspaces of the $W_n$ for $n\geq 2$. Define the manifolds
\[
\mathcal{M}:=\mathrm{span}\{h_k-h_\ell:k,\ell\geq 2\} \ \  \ \mathrm{and} \ \  \mathcal{M}_d:=\mathrm{span}\{h_k-h_\ell:k,\ell\in d\mathbb{N}\}
\]
for $d\in\mathbb{N}$ whose closures belong to $\mathrm{Lat}(\mathcal{W})$ due to the identity $W_nh_k=h_{nk}-h_n$ (see \cite[page 249]{Waleed}). It is clear that $\mathcal{M}_{d}\subset\mathcal{M}\subset\mathcal{N}$, and $d_1$ divides $d_2$ if and only if $
\mathcal{M}_{d_2}\subset\mathcal{M}_{d_1}$. It follows that $(\mathcal{M}_d)_{d\in\mathbb{N}}$ is a sublattice of $\mathrm{Lat}(\mathcal{W})$ which is isomorphic to $\mathbb{N}$ with respect to division, and $\mathcal{M}_p$ for any prime $p$ is a maximal element in $(\mathcal{M}_d)_{d\in\mathbb{N}}$. Notice that for any $k,\ell\in d\mathbb{N}$ we have
\[
h_k-h_l=(h_k-h_d)-(h_\ell-h_d)=W_d(h_{k/d}-h_{\ell/d})\in W_d(\mathcal{N})
\]
 (where $h_1\equiv 0$) and therefore $\mathcal{M}_d= W_d(\mathcal{N})$. Since each $W_d$ is a shift of infinite multiplicity, it follows that $\mathcal{M}_d$ has infinite codimension. In particular, the range of $W_d$ is the closure of $\mathcal{M}_d$ if and only if the RH holds by Theorem \ref{Waleedsemigroup}. We shall need a lemma before moving on. Let $\bigvee_n E_n$ denote the smallest closed subspace containing the sets $E_n$. 
\begin{lem}\label{Intersection of kernels} The semigroup $\mathcal{W}$ satisfies
	\[
	\bigcap_{n=k+1}^{\infty}\textup{Ker}W_{n}^{*}=\textup{span}_{1\leq n\leq k}\left\lbrace 1-z^{n}\right\rbrace \ \ \ \forall \ \ k\geq 1.
	\]

\end{lem}
\begin{proof}
	Suppose $f\in\textup{Ker}W_{n}^{*}$ for all $n\geq k+1$. Computing the first term for the power series in Lemma \ref{W_n^*}, we get
	\[
	B_n(0)=\hat{f}(0)+\hat{f}(1)+\dots+\hat{f}(n-1)=0 \ \ \ \forall \ \ n\geq k+1.
	\]
	This implies that $\hat{f}(n)=B_{n+1}(0)-B_n(0)=0$ for all $n\geq k+1$ and hence
	\begin{align*}
	f(z)&=\sum_{n=0}^k\hat{f}(n)z^n=-\hat{f}(1)-\dots-\hat{f}(k)+\sum_{n=1}^k\hat{f}(n)z^n =\sum_{n=1}^k\hat{f}(n)(z^n-1).
	\end{align*}
	Therefore we get $f\in\textup{span}\left\lbrace 1-z,1-z^{2},\dots,1-z^{k}\right\rbrace$ and
	\begin{align*}
	\bigcap_{n=k+1}^{\infty}\textup{Ker}W_{n}^{*}\subset \textup{span}\left\lbrace 1-z,1-z^{2},\dots,1-z^{k}\right\rbrace.
	\end{align*} 
	The opposite inclusion holds since $W_n^*(1-z^k)=0$ for $n> k$ by Lemma \ref{W_n^*}. 
\end{proof}
We arrive at the main result of this section which suggests that the truth of the RH supports the ISP (see Corollary \ref{ISP maximal Wn}).

\begin{thm}\label{4}
	The closure of $\mathcal{M}$ is a proper element in $\mathrm{Lat}(\mathcal{W})$ and equals $\bigvee_{d=2}^{\infty}\mathcal{M}_d$. The closure of $\mathcal{M}$ is maximal in $\mathrm{Lat}(\mathcal{W})$  if and only if the RH is true. In this case we have $\mathrm{codim}(\mathcal{M})=1$.
\end{thm}
\begin{proof} We first show that $\overline{\mathcal{M}}$ equals
	$\bigvee_{n=2}^{\infty}W_n(\mathcal{N})$. 
	Since $W_n(\mathcal{N})\subset\mathcal{M}$ for all $n\geq 2$, it is clear that $\bigvee_{n=2}^{\infty}W_n\mathcal{N}\subset\overline{\mathcal{M}}$. For the opposite inclusion notice that each difference can be written as
	\[
	h_{k}-h_{\ell}=(h_{k\ell}-h_{\ell})-(h_{k\ell}-h_{k})=W_\ell h_k-W_k h_\ell\in\bigvee_{n=2}^{\infty}W_n(\mathcal{N})
	\]
	and hence $\overline{\mathcal{M}}\subset\bigvee_{n=2}^{\infty}W_n(\mathcal{N})$. Therefore  $\overline{\mathcal{M}}=\bigvee_{d=2}^{\infty}\mathcal{M}_d$ since $\mathcal{M}_d= W_d(\mathcal{N})$. To prove that it is a proper subspace notice that
	\begin{equation}\label{M subset (1-z)perp}
	\overline{\mathcal{M}}=\bigvee_{n=2}^{\infty}W_n(\mathcal{N})\subset \bigvee_{n=2}^{\infty}\textup{Im}W_{n}=\left(\bigcap_{n=2}^{\infty}\textup{Ker}W_{n}^{*}\right)^\perp=\{1-z\}^\perp
	\end{equation}
	by Lemma \ref{Intersection of kernels} with $k=1$. Now suppose RH is true and hence $\overline{W_n(\mathcal{N})}= \textup{Im}W_{n}$ by Theorem \ref{Waleedsemigroup}. So we actually have equality in \eqref{M subset (1-z)perp} which implies that $\mathcal{M}^\perp=\mathbb{C}(1-z)$. Hence $\mathrm{codim}(\mathcal{M})=1$ and the closure of $\mathcal{M}$ is maximal for all $W_n$.
Conversely, suppose the RH is false. Then the closure of $\mathcal{N}$ is a proper subspace of $H^2$ and so
\[
\overline{\mathcal{M}}\subset\overline{\mathcal{N}}\subsetneq H^2.
\]
The goal is to prove that $\overline{\mathcal{M}}$ is not maximal for any $W_n$ by showing that $\overline{\mathcal{M}}\neq\overline{\mathcal{N}}$. Note that $1-z$ belongs to $\mathcal{M}^\perp\cap \mathcal{D}_{\delta_{1}}$ by \eqref{Functions in Local D space} and \eqref{M subset (1-z)perp}. But $\mathcal{N}^{\perp}\cap \mathcal{D}_{\delta_{1}}=\left\lbrace 0\right\rbrace$ by \eqref{Nperp intesection D}. This proves that $\overline{\mathcal{M}}\neq\overline{\mathcal{N}}$ and hence the converse.
\end{proof}

\section{Does the closure of $\mathcal{N}$ reduce any $W_n$?}
We know that the closure of $\mathcal{N}$ is invariant under $W_n$ for all $n\in\mathbb{N}$. Is the closure of $\mathcal{N}$ also invariant under any $W_n^*$, equivalently, does the closure of $\mathcal{N}$ \emph{reduce} any $W_n$. We shall need the following consequence of the Wold decomposition for shifts.

\begin{thm}\label{ShiftTheorem}(See \cite[page 4]{Hardy classes})
	Let $S\in B(\mathcal{H})$ be a shift operator. A subspace $M$ of  $\mathcal{H}$ reduces $S$ if and only if $$M=\sum_{j=0}^{\infty}\bigoplus S^{j}E,$$ where $E$ is a subspace of \textup{Ker}$S^{*}$.
\end{thm}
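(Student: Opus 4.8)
The plan is to read this off from the Wold decomposition of an isometry. Recall that if $V$ is an isometry on a Hilbert space $K$ then $VK$ is closed, the wandering subspace $W:=K\ominus VK$ coincides with $\mathrm{Ker}\,V^{*}$ (since $V^{*}f=0$ iff $f\perp VK$), the subspaces $V^{j}W$ are pairwise orthogonal, and $K=\big(\sum_{j=0}^{\infty}\bigoplus V^{j}W\big)\oplus\bigcap_{j=0}^{\infty}V^{j}K$. Because $V^{n}V^{*n}$ is the orthogonal projection onto $V^{n}K$, we have $\|V^{*n}f\|=\|V^{n}V^{*n}f\|$, and these projections decrease to the projection onto $\bigcap_{j}V^{j}K$; hence if $V$ is a shift then this intersection is $\{0\}$ and $K=\sum_{j=0}^{\infty}\bigoplus V^{j}(\mathrm{Ker}\,V^{*})$. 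I will use this fact twice, once on $S$ and once on a restriction of $S$.

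For the ``if'' direction, suppose $M=\sum_{j=0}^{\infty}\bigoplus S^{j}E$ with $E\subseteq\mathrm{Ker}\,S^{*}$. Since $S$ is bounded, $SM$ lies in the closed span of $\{S^{j+1}e:e\in E,\ j\geq 0\}\subseteq M$, so $SM\subseteq M$. For the adjoint it is enough to test $S^{*}$ on each orthogonal summand $S^{j}E$: if $j\geq 1$ then $S^{*}(S^{j}e)=S^{j-1}e\in M$ because $S^{*}S=I$, and if $j=0$ then $S^{*}e=0$ since $e\in\mathrm{Ker}\,S^{*}$. Hence $S^{*}M\subseteq M$, and $M$ reduces $S$.

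For the ``only if'' direction, assume $M$ reduces $S$, so $P_{M}$ commutes with both $S$ and $S^{*}$. Then $S|_{M}$ is an isometry of $M$ with $(S|_{M})^{*}=S^{*}|_{M}$, whence $\|(S|_{M})^{*n}m\|=\|S^{*n}m\|\to 0$ for all $m\in M$; that is, $S|_{M}$ is itself a shift on $M$. Applying the first paragraph to $S|_{M}$ gives $M=\sum_{j=0}^{\infty}\bigoplus S^{j}E$ with $E:=M\ominus SM=\mathrm{Ker}\,(S|_{M})^{*}$. It then remains to upgrade this to $E\subseteq\mathrm{Ker}\,S^{*}$: for $e\in E$ we have $e\perp SM$, so $\langle S^{*}e,m\rangle=\langle e,Sm\rangle=0$ for every $m\in M$; but $S^{*}e\in M$ because $M$ reduces $S$, so choosing $m=S^{*}e$ forces $S^{*}e=0$. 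The only step that genuinely uses the hypothesis that $M$ \emph{reduces} $S$ — rather than merely being invariant for it — is the identity $(S|_{M})^{*}=S^{*}|_{M}$, which is what makes $S|_{M}$ a shift and what places its wandering subspace inside $\mathrm{Ker}\,S^{*}$; apart from this, I expect the verification to be routine bookkeeping with closures in the infinite orthogonal direct sums.
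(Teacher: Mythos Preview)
Your proof is correct. Note, however, that the paper does not actually supply its own proof of this theorem: it is quoted as a known consequence of the Wold decomposition with a citation to Rosenblum--Rovnyak, and your argument is precisely the standard Wold-decomposition derivation the paper is pointing to. One minor simplification: once you have established $(S|_{M})^{*}=S^{*}|_{M}$, the inclusion $E\subseteq\mathrm{Ker}\,S^{*}$ is immediate from $E=\mathrm{Ker}\,(S|_{M})^{*}=\mathrm{Ker}\,(S^{*}|_{M})=M\cap\mathrm{Ker}\,S^{*}$, so the separate orthogonality argument at the end is not needed.
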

We shall also need the following lemma about kernels of $W_n^*$.
\begin{lem}\label{lemnewreform}
	\textup{Ker} $W_{n}^{*}$ is a subset of the local Dirichlet space $ \mathcal{D}_{\delta_1}$ for all $n\geq 2$.
\end{lem}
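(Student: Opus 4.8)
The plan is to verify the series criterion \eqref{Dirichlet integral series} for the local Dirichlet space at $\zeta=1$, feeding it the explicit description of $\mathrm{Ker}\,W_n^*$ supplied by Theorem~\ref{W_n^*}.

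First I would record what the kernel condition means coefficient‑wise. If $f(z)=\sum_{m=0}^\infty a_mz^m$, then by Theorem~\ref{W_n^*} we have $W_n^*f(z)=\sum_{j=0}^\infty B_n(j)z^j$, so $f\in\mathrm{Ker}\,W_n^*$ exactly when every block sum $B_n(j)=a_{nj}+a_{nj+1}+\dots+a_{nj+n-1}$ vanishes. The crucial consequence is a telescoping estimate for the partial sums $S_N=\sum_{m=0}^N a_m$: writing $N=nj+r$ with $0\le r\le n-1$, the first $j$ full blocks cancel, leaving $S_N=\sum_{s=0}^r a_{nj+s}$, a sum of at most $n$ consecutive coefficients, so $|S_N|^2\le n\sum_{s=0}^{n-1}|a_{nj+s}|^2$ by Cauchy--Schwarz.

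From here the two halves of \eqref{Dirichlet integral series} with $\zeta=1$ follow quickly. For the first, since $f\in H^2$ the block sums $\sum_{s=0}^{n-1}|a_{nj+s}|^2$ tend to $0$ as $j\to\infty$, hence $S_N\to 0$ and the series $\sum_m a_m$ converges (to $0$). For the second, note that $\sum_{m=k+1}^\infty a_m=-S_k$, and grouping $k=nj+r$ gives
\[
\sum_{k=0}^\infty|S_k|^2\;\le\;n\sum_{j=0}^\infty\Big(\sum_{s=0}^{n-1}|a_{nj+s}|\Big)^2\;\le\;n^2\sum_{j=0}^\infty\sum_{s=0}^{n-1}|a_{nj+s}|^2\;=\;n^2\|f\|_{H^2}^2<\infty.
\]
Thus both conditions of \eqref{Dirichlet integral series} hold for $\zeta=1$, so $\mathcal{D}_1(f)<\infty$, i.e.\ $f\in\mathcal{D}_{\delta_1}$.

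I do not expect a genuine obstacle here: the whole argument rests on the single observation that the block structure of $\mathrm{Ker}\,W_n^*$ collapses each partial sum of $(a_m)$ to a short tail sum, after which Cauchy--Schwarz and the membership $f\in H^2$ do everything. The only place demanding a little care is the bookkeeping when $N$ (or $k$) is not a multiple of $n$, which is handled uniformly by the decomposition $N=nj+r$.
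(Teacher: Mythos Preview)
Your proof is correct and follows essentially the same route as the paper: use Theorem~\ref{W_n^*} to see that the vanishing block sums collapse every partial (equivalently, tail) sum of the coefficients to at most $n$ consecutive terms, then verify both parts of \eqref{Dirichlet integral series} via Cauchy--Schwarz. Your bookkeeping through $S_N$ and the decomposition $N=nj+r$ is in fact slightly cleaner than the paper's version, yielding the constant $n^2$ in place of $2^n n$.
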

\begin{proof}
	Let $f\in\textup{Ker} W_{n}^{*}$. Our goal is to prove that $\mathcal{D}_1(f)<\infty$ using \eqref{Dirichlet integral series}. By Lemma \ref{W_n^*} we have
	\begin{equation}\label{Bn(k)}
	B_n(k)=\hat{f}(nk)+\hat{f}(nk+1)+\dots+\hat{f}(nk+n-1)=0
	\end{equation}
	for all $k\geq 0$. In particular this implies that 
	\begin{equation}\label{sum of Bn(k)}
	\sum_{j=0}^\infty\hat{f}(j)=\sum_{k=0}^\infty B_n(k)=0
	\end{equation}
	 which is the first condition in \eqref{Dirichlet integral series}. For the second condition, let $k_i$ be the unique postive integer such that $nk_i\leq i+1\leq nk_i+n-1$ for each $i\geq 0$. Then by \eqref{Bn(k)} we have
	\begin{align*}
	\mathcal{D}_1(f)&=\sum_{i=0}^\infty|\sum_{j=i+1}^\infty\hat{f}(j)|^2
	=\sum_{i=0}^\infty|\sum_{j=i+1}^{nk_i+n-1}\hat{f}(j)|^2
	\leq \sum_{i=0}^\infty \left(\sum_{j=nk_i}^{nk_i+n-1}|\hat{f}(j)|\right)^2 \\
	&\leq 2^n\sum_{i=0}^\infty \sum_{j=nk_i}^{nk_i+n-1}|\hat{f}(j)|^2=2^n n\sum_{j=0}^\infty |\hat{f}(j)|^2<\infty.
	\end{align*}
	Therefore $f\in\mathcal{D}_{\delta_1}$.
\end{proof}
We are ready to answer the question and again it is equivalent to the RH.
\begin{thm}\label{newreform}
	The closure of $\mathcal{N}$ reduces $W_{n}$ for some $n\geq 2$ if and only if the RH is true.
\end{thm}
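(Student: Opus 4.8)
The plan is to exploit the Wold-type decomposition for shifts (Theorem \ref{ShiftTheorem}) together with Lemma \ref{lemnewreform}. Recall that $W_n/\sqrt n$ is a shift by Proposition \ref{Wn shift}, so a closed subspace $M$ reduces $W_n$ (equivalently reduces $W_n/\sqrt n$) if and only if $M=\bigoplus_{j\geq 0}(W_n/\sqrt n)^j E=\bigoplus_{j\geq 0} W_n^j E$ for some subspace $E\subset\mathrm{Ker}\,W_n^*$. The key structural consequence is that if $\overline{\mathcal N}$ reduces some $W_n$, then it must contain a nonzero subspace $E\subset\mathrm{Ker}\,W_n^*$; indeed $E$ is exactly the piece of the Wold decomposition, and it is nonzero unless $\overline{\mathcal N}=\{0\}$, which is false since $h_2\neq 0$.

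Assume the RH is false, so $\mathcal N^\perp\neq\{0\}$ (Theorem \ref{Waleedsemigroup}) and $\overline{\mathcal N}\subsetneq H^2$. Suppose for contradiction that $\overline{\mathcal N}$ reduces $W_n$ for some $n\geq 2$. By the Wold decomposition above there is a nonzero vector $f\in E\subset\mathrm{Ker}\,W_n^*\cap\overline{\mathcal N}$. By Lemma \ref{lemnewreform}, $\mathrm{Ker}\,W_n^*\subset\mathcal D_{\delta_1}$, so $f\in\overline{\mathcal N}\cap\mathcal D_{\delta_1}$. Now the complementary relation \eqref{Nperp intesection D} states $\mathcal N^\perp\cap\mathcal D_{\delta_1}=\{0\}$; I would need to convert a statement about $\overline{\mathcal N}$ into one about $\mathcal N^\perp$. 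Here I expect the following to work: since $\overline{\mathcal N}$ reduces $W_n$, its orthogonal complement $\overline{\mathcal N}^\perp=\mathcal N^\perp$ also reduces $W_n$, and $\mathcal N^\perp\neq\{0\}$; applying the same Wold argument to the reducing subspace $\mathcal N^\perp$ produces a nonzero vector $g\in\mathrm{Ker}\,W_n^*\cap\mathcal N^\perp\subset\mathcal D_{\delta_1}\cap\mathcal N^\perp=\{0\}$ by \eqref{Nperp intesection D}, a contradiction. Hence $\overline{\mathcal N}$ reduces no $W_n$ when the RH fails.

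For the converse, assume the RH holds. Then $\mathcal N$ is dense in $H^2$ by Theorem \ref{Waleedsemigroup}, so $\overline{\mathcal N}=H^2$, which trivially reduces every $W_n$. This direction is immediate.

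The main obstacle is the step passing from "$\overline{\mathcal N}$ reduces $W_n$" to the existence of a nonzero vector in $\mathrm{Ker}\,W_n^*$ lying in the relevant subspace, and in particular making sure the Wold/Theorem \ref{ShiftTheorem} decomposition really does force $E\neq\{0\}$. One must rule out the degenerate possibility $E=\{0\}$, i.e.\ that the reducing subspace is $\bigoplus_{j\geq 0}W_n^j\{0\}=\{0\}$; since neither $\overline{\mathcal N}$ nor $\mathcal N^\perp$ is zero (the latter by the failure of RH), this is fine, but it is the point that needs care. A secondary subtlety is verifying that $\mathcal N^\perp$ genuinely reduces $W_n$ once $\overline{\mathcal N}$ does — this is the standard fact that the orthogonal complement of a reducing subspace is reducing, valid for any bounded operator, so no difficulty arises there.
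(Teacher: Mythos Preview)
Your argument is correct and follows essentially the same route as the paper: pass from $\overline{\mathcal N}$ to the reducing subspace $\mathcal N^\perp$, apply the Wold decomposition (Theorem \ref{ShiftTheorem}) to write $\mathcal N^\perp=\bigoplus_{j\ge 0}W_n^jE$ with $E\subset\mathrm{Ker}\,W_n^*$, then use Lemma \ref{lemnewreform} and \eqref{Nperp intesection D} to force $E=\{0\}$. The only cosmetic difference is that the paper argues directly (concluding $\mathcal N^\perp=\{0\}$, hence RH) rather than by contradiction, and it skips your initial detour through $\overline{\mathcal N}\cap\mathcal D_{\delta_1}$, which as you yourself noticed is not the right object to look at.
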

\begin{proof}
	Suppose the RH is true. Then the closure of $\mathcal{N} $ is $H^{2}$ which trivially reduces $W_n$ for all $n\geq 2$. Conversely, suppose the closure of $\mathcal{N}$ reduces $W_{n}$ for some $n\geq 2$. This implies that $\mathcal{N}^{\bot}$ also reduces $W_{n}$. Since $W_{n}/\sqrt{n}$ is a shift operator by Proposition \ref{Wn shift}, it follows by Theorem \ref{ShiftTheorem} that there exists a subspace $E$ of \textup{Ker}$W_{n}^{*}$ such that
	\begin{center}
		$\mathcal{N}^{\bot}=\sum_{j=0}^{\infty}\bigoplus W_{n}^{j}E$.
	\end{center}
	In particular $E\subset \mathcal{N}^{\bot}$ and by Lemma \ref{lemnewreform} we have $E\subset\mathrm{Ker}W_n^*\subset\mathcal{D}_{\delta_{1}}$. But since $\mathcal{D}_{\delta_{1}}\cap\mathcal{N}^{\bot}=\left\lbrace 0\right\rbrace$ by \eqref{Nperp intesection D} we have $E=\left\lbrace 0\right\rbrace$. So $\mathcal{N}^{\bot}=\left\lbrace 0\right\rbrace$ and the RH holds. 
\end{proof} 

\section{Cyclic vectors}

Until now the only known example of a cyclic vector for $\mathcal{W}$ is the constant $1$. Indeed $(W_n1)(z)=1+z+\ldots+z^{n-1}$ for all $n\geq 1$ so $\mathrm{span}\{W_n1:n\geq 1\}$ contains all analytic polynomials and is hence dense in $H^2$. Our first objective is to obtain a  new family of cyclic vectors for $\mathcal{W}$. 

\begin{thm}\label{Cyclic examples} For each $m\in\mathbb{N}$ and $\lambda\in\mathbb{C}$ with $|\lambda+1|>\sqrt{m+1}$, the polynomial
	$p_{m,\lambda}(z):=z^{m}+\dots+z-\lambda$ is a cyclic vector for $\mathcal{W}$.
\end{thm}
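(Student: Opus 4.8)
The plan is to use the semiconjugacy \eqref{semiconjugate} to transfer the cyclicity question for $\mathcal{W}$ to a more tractable statement about the semigroup $\mathcal{T}=(T_n)_{n\geq 1}$ acting on $H^2_0$, and then to exploit the hypothesis $|\lambda+1|>\sqrt{m+1}$ to show that the relevant orbit is large. First I would compute $(I-S)p_{m,\lambda}$: since $p_{m,\lambda}(z)=(1+z+\dots+z^m)-(\lambda+1)=\frac{1-z^{m+1}}{1-z}-(\lambda+1)$, we get $(I-S)p_{m,\lambda}(z)=(1-z)p_{m,\lambda}(z)=(1-z^{m+1})-(\lambda+1)(1-z)=-(\lambda+1)+(\lambda+1)z-z^{m+1}$, i.e. a polynomial vanishing at $z=0$ once we subtract off the constant — but actually the point is cleaner: by \eqref{semiconjugate}, $(I-S)W_n p_{m,\lambda}=T_n(I-S)p_{m,\lambda}$, so $\mathrm{span}\{W_n p_{m,\lambda}:n\geq 1\}$ is dense in $H^2$ as soon as $\mathrm{span}\{T_n q:n\geq 1\}$ is dense in the closure of $(I-S)H^2$, where $q:=(I-S)p_{m,\lambda}$. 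Since $I-S$ has dense range, this reduces cyclicity of $p_{m,\lambda}$ for $\mathcal{W}$ to cyclicity of $q$ for $\mathcal{T}$ (after projecting away the constant term).

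Next I would analyze $q(z)=(1-z)p_{m,\lambda}(z)$ directly as a function whose cyclicity for $\mathcal{T}$ on $H^2_0$ can be checked by a Wintner/Beurling-type criterion. The key structural fact is that $p_{m,\lambda}(z)=\frac{1-z^{m+1}}{1-z}-(\lambda+1)$ has no zeros in $\overline{\mathbb D}$: indeed for $|z|\leq 1$ we have $\bigl|\frac{1-z^{m+1}}{1-z}\bigr|=|1+z+\dots+z^m|\leq m+1$, so if $|\lambda+1|>m+1$ then $p_{m,\lambda}(z)\neq 0$ on $\overline{\mathbb D}$. (The hypothesis as stated is the weaker $|\lambda+1|>\sqrt{m+1}$, so I would instead use the $H^2$ estimate $\|1+z+\dots+z^m\|_{H^2}^2=m+1$ together with the reproducing property, or more likely reduce to showing $p_{m,\lambda}$ is an invertible element of a suitable multiplier algebra / that $1/p_{m,\lambda}\in H^2$, for which $\|z^m+\dots+z\|_{H^2}=\sqrt{m+1}<|\lambda+1|$ is exactly what is needed via a Neumann series $\frac{1}{p_{m,\lambda}}=\frac{-1}{\lambda+1}\cdot\frac{1}{1-\frac{z^m+\dots+z}{\lambda+1}}$ converging in $H^2$.) Having $1/p_{m,\lambda}\in H^2$, I would argue that multiplication-type arguments let one pass between cyclicity of $p_{m,\lambda}$ and cyclicity of the constant $1$, which is already known to be cyclic for $\mathcal{W}$.

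More concretely, the cleanest route: show that $1\in\overline{\mathrm{span}}\{W_n p_{m,\lambda}:n\geq 1\}$. Writing $\frac{1}{\lambda+1}\cdot\frac{1}{1-w}=\sum_{j\geq 0}\frac{w^j}{(\lambda+1)^{j+1}}$ with $w=z+\dots+z^m$, and noting $-p_{m,\lambda}=(\lambda+1)(1-w/(\lambda+1))$... hmm, one wants to realize $1$ as a limit of linear combinations of $W_n p_{m,\lambda}$. Since $W_1=I$, $W_1 p_{m,\lambda}=p_{m,\lambda}$ is in the span; and $W_2 p_{m,\lambda},W_3p_{m,\lambda},\dots$ bring in higher-order terms. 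The hard part will be showing the span is all of $H^2$ rather than some proper invariant subspace — this is where the quantitative bound $|\lambda+1|>\sqrt{m+1}$ must be used decisively, presumably by producing, inside $\overline{\mathrm{span}}\{W_np_{m,\lambda}\}$, a vector arbitrarily close to the constant $1$ (then invoking that $1$ is cyclic for $\mathcal{W}$), or by showing the cyclic subspace generated by $p_{m,\lambda}$ contains $W_n 1$ for all $n$. I would expect the argument to run: the closed span $\mathcal{C}$ of $\{W_np_{m,\lambda}\}$ is $\mathcal{W}$-invariant; if $g\perp\mathcal{C}$ then $\langle W_np_{m,\lambda},g\rangle=0$ for all $n$, and using the adjoint formula of Theorem \ref{W_n^*} together with $1/p_{m,\lambda}\in H^2$ one derives enough block-sum identities on $\hat g$ to force $g\perp W_n 1$ for all $n$, hence $g=0$ by cyclicity of $1$. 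The main obstacle is precisely making this orthogonality argument tight under the square-root (rather than linear) growth condition on $|\lambda+1|$; I suspect the resolution uses the $H^2$ norm identity $\|z+\dots+z^m\|=\sqrt{m+1}$ in a Neumann series so that $p_{m,\lambda}^{-1}$ is a genuine $H^2$ function, which is the natural threshold appearing in the hypothesis.
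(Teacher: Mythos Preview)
Your proposal is not yet a proof: it is a sequence of exploratory ideas, two of which are dead ends, and the third of which is the right direction but is never carried out.

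The invertibility route fails outright. Take $m=1$ and $\lambda=1/2$: then $|\lambda+1|=3/2>\sqrt{2}$, so the hypothesis is satisfied, yet $p_{1,1/2}(z)=z-1/2$ vanishes at $z=1/2\in\mathbb{D}$. Hence $1/p_{m,\lambda}$ need not even be holomorphic on $\mathbb{D}$, let alone in $H^2$. Your Neumann-series heuristic is also miscalibrated in two ways: the correct factorization is $p_{m,\lambda}=-\lambda\bigl(1-\tfrac{z+\dots+z^m}{\lambda}\bigr)$, so $\lambda$ (not $\lambda+1$) would appear; and $H^2$ is not a Banach algebra, so the $H^2$ norm $\|z+\dots+z^m\|_{H^2}=\sqrt{m}$ does not control $\|w^j\|_{H^2}$ as the Neumann series would require. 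More importantly, even if $1/p_{m,\lambda}$ were in $H^2$, the $W_n$ are weighted \emph{composition} operators, not multiplication operators, so ``multiplication-type arguments'' do not transfer cyclicity from $1$ to $p_{m,\lambda}$.

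The semiconjugacy reduction to $\mathcal{T}$ is formally valid but trades one hard problem for another of the same type (PDCP-style cyclicity), and you do not pursue it.

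Your final paragraph gestures toward the actual argument but misses the key computation. The paper proceeds exactly as you outline at the end: assume $f\perp W_n p_{m,\lambda}$ for all $n\geq 1$, and derive constraints on $\hat f$. The decisive idea you are missing is to specialize to $n=(m+1)^k$. Since $W_n p_{m,\lambda}(z)=\sum_{j=n}^{(m+1)n-1}z^j-\lambda\sum_{j=0}^{n-1}z^j$, the orthogonality relations telescope to give
\[
\sum_{j=0}^{(m+1)^{k+1}-1}\hat f(j)=(\lambda+1)^{k+1}\hat f(0),
\]
and then Cauchy--Schwarz on the left-hand side (a sum of $(m+1)^{k+1}$ coefficients) yields
\[
|\hat f(0)|\leq\left(\frac{\sqrt{m+1}}{|\lambda+1|}\right)^{k+1}\|f\|\xrightarrow[k\to\infty]{}0.
\]
This is precisely where the threshold $\sqrt{m+1}$ comes from: it is the Cauchy--Schwarz factor for a block of length $(m+1)^{k+1}$ against the geometric growth $(\lambda+1)^{k+1}$, not an $H^2$-norm of the symbol. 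An induction using $n=(s+1)(m+1)^k$ then kills each $\hat f(s)$ in turn.
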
 
\begin{proof}
	Note first that
	\begin{align*}
	W_{n}(z^{m}+\dots+z-\lambda)=\sum_{j=n}^{(m+1)n-1}z^{j}-\lambda \sum_{j=0}^{n-1}z^{j}.
	\end{align*}
	Now suppose there exists an $f\in H^{2}$ such that $f\perp W_{n}p_{m,\lambda}$, for every $n\geq1$. Looking first at the particular cases $n=(m+1)^{k}$, we get:
	\begin{align*}
	k &=0         &  \sum_{j=1}^m\hat{f}(j) & = \lambda\hat{f}(0)  \\
	k &=1         &    \sum_{j=m+1}^{(m+1)^2-1}\hat{f}(j) & = \lambda \sum_{j=0}^m\hat{f}(j) =\lambda(\lambda+1)\hat{f}(0)  \\
	k &=2         &  \sum_{j=(m+1)^2}^{(m+1)^3-1}\hat{f}(j) & =\lambda\sum_{j=0}^{(m+1)^2-1}\hat{f}(j) =\lambda(\lambda+1)^{2}\hat{f}(0)
	\end{align*}
	and for the general case we use induction to get
	\begin{align*}
	\sum_{j=(m+1)^{k}}^{(m+1)^{k+1}-1}\hat{f}(j)=\lambda(\lambda+1)^{k}\hat{f}(0).
	\end{align*}
	By the geometric sum formula with common ration $\lambda+1$ we get
	\begin{align*}
	\sum_{j=0}^{(m+1)^{k+1}-1}\hat{f}(j)&
	=\sum_{j=0}^k\lambda(\lambda+1)^{j}\hat{f}(0)+\hat{f}(0)\\&
	=\left(\dfrac{1-(\lambda+1)^{k+1}}{1-(\lambda+1)}\right)\lambda\hat{f}(0)+\hat{f}(0)\\&
	=(\lambda+1)^{k+1}\hat{f}(0).
	\end{align*}
	Using the Cauchy-Schwarz inequality, we get
	\[| \hat{f}(0)|\leq\left( \dfrac{\sqrt{m+1}}{|\lambda+1|}\right) ^{k+1}\left\| f\right\| \ \ \forall \ \ k\in\mathbb{N}\]
	and $\sqrt{m+1}<|\lambda+1|$ implies that $\hat{f}(0)=0$. By induction if we assume \[\hat{f}(0)=\ldots=\hat{f}(s-1)=0\] for any $s\in\mathbb{N}$, we can use the same argument above taking $n=(s+1)(m+1)^{k}$ to show that $\hat{f}(s)=0$ and thus $f\equiv0$. Therefore, $p_{m,\lambda}(z)=z^{m}+\dots+z-\lambda$ is a cyclic vector for $|\lambda+1|>\sqrt{m+1}$.
\end{proof}
Therefore we may replace the constant $1$ in Theorem \ref{waleed} by any of the polynomials above by Theorems \ref{Waleedsemigroup}.
\begin{cor}\label{concrete cyclic vectors} The RH holds if and only if $z^{m}+\dots+z-\lambda$ belongs to the closure of $\mathcal{N}$ for any $|\lambda+1|>\sqrt{m+1}$.
	\end{cor}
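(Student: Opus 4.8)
The plan is to deduce this directly from Theorem~\ref{Waleedsemigroup} and Theorem~\ref{Cyclic examples}, with no new computation. Fix $m\in\mathbb{N}$ and $\lambda\in\mathbb{C}$ with $|\lambda+1|>\sqrt{m+1}$, and write $p=p_{m,\lambda}(z)=z^{m}+\dots+z-\lambda$. For the forward implication, I would assume the RH. Then by Theorem~\ref{Waleedsemigroup} the manifold $\mathcal{N}$ is dense in $H^2$, so its closure is all of $H^2$; in particular $p$ lies in the closure of $\mathcal{N}$, and since $m,\lambda$ were arbitrary subject to $|\lambda+1|>\sqrt{m+1}$, this establishes the statement for every such polynomial at once.

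For the converse, I would suppose that $p$ belongs to the closure of $\mathcal{N}$ for some admissible pair $(m,\lambda)$. The key preliminary observation is that $\overline{\mathcal{N}}$ is invariant under every $W_n$: indeed $W_n(\mathcal{N})\subset\mathcal{N}$, as recalled in the excerpt via $W_nh_k=h_{nk}-h_n$, and each $W_n$ is bounded on $H^2$, so $W_n(\overline{\mathcal{N}})\subset\overline{W_n(\mathcal{N})}\subset\overline{\mathcal{N}}$. Hence $\overline{\mathcal{N}}$ contains $W_np$ for every $n\geq 1$, and being closed it contains $\overline{\mathrm{span}\{W_np:n\geq 1\}}$. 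But $p$ is a cyclic vector for $\mathcal{W}$ by Theorem~\ref{Cyclic examples}, so this closed span is all of $H^2$; therefore $\overline{\mathcal{N}}=H^2$, i.e.\ $\mathcal{N}$ is dense, and the RH follows from Theorem~\ref{Waleedsemigroup}. Equivalently, the hypothesis says that $\overline{\mathcal{N}}$ contains a cyclic vector for $\mathcal{W}$, which is precisely condition (2) of Theorem~\ref{Waleedsemigroup}.

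I do not expect any serious obstacle: the only point requiring care is the $\mathcal{W}$-invariance of the \emph{closure} $\overline{\mathcal{N}}$, which is immediate from the boundedness of each $W_n$ together with the known invariance of $\mathcal{N}$ itself. The result is really just the observation that Theorem~\ref{Cyclic examples} supplies a new cyclic vector to which the cyclic-vector criterion of Theorem~\ref{Waleedsemigroup} applies, exactly as the constant $1$ did in Theorem~\ref{waleed}.
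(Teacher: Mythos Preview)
Your proposal is correct and follows exactly the approach the paper intends: the corollary is an immediate consequence of Theorem~\ref{Waleedsemigroup} combined with the cyclicity established in Theorem~\ref{Cyclic examples}, and your argument spells out precisely that deduction (including the routine check that $\overline{\mathcal{N}}$ is $\mathcal{W}$-invariant). The paper itself gives no further proof beyond citing these two theorems, so your write-up is, if anything, more detailed than the original.
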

Notice that $\lambda$ cannot be equal to $-1$ in the polynomials above. That is because they are no longer cyclic vectors for $\mathcal{W}$ as shown by the following result.
\begin{prop}\label{non-cyclicity}
	If $f\in H^2$  is a cyclic vector for $\mathcal{W}$, then $\hat{f}(0)\neq\hat{f}(1)$.
\end{prop}
\begin{proof} The condition $\hat{f}(0)=\hat{f}(1)$ is equivalent to 
	$f\perp(1-z)$. For any $f\in H^{2}$ 
	\begin{align*}
	\overline{\textup{span}\left\lbrace W_{n}f:n\geq2\right\rbrace }\subset\bigvee_{n=2}^{\infty}\text{Im}W_{n}=\left(\bigcap_{n=2}^{\infty}\textup{Ker}W_{n}^{*}\right)^\perp=\left\lbrace 1-z\right\rbrace ^{\perp}
	\end{align*} 
	by Lemma \ref{Intersection of kernels}. In particular, if $f\perp(1-z)$ then
	\begin{align*}
	\overline{\textup{span}\left\lbrace W_{n}f:n\geq1\right\rbrace }\subset\left\lbrace 1-z\right\rbrace ^{\perp}\subsetneq H^{2}
	\end{align*}
	since $W_1\equiv I$. This proves that $f$ is not cyclic.
\end{proof}

Finally, we show that there exist polynomials which are \emph{not} cyclic for $\mathcal{W}$ but for which the conclusion of Corollary \ref{concrete cyclic vectors} still holds. Yang \cite{Yang} gave a generalization of the classical Nyman-Beurling reformulation for the RH, replacing $\chi_{(0,1)}$ with $\chi_{(a,b)}$ for any $0\leq a<b\leq1$. Recall that $f_{\lambda}(x)=\{\lambda/x\rbrace-\lambda\lbrace 1/x\}$ for $0<\lambda\leq 1$. Yang proved that the RH is true if and only if $\chi_{(a,b)}$ belongs to the closed linear span of $\left\lbrace f_{\lambda}:0<\lambda\leq1\right\rbrace,$ for any $0\leq a<b\leq1.$ In \cite{Bagchi} it was shown that the Baéz-Duarte criterion is equivalent to the density of the linear span of $\left\lbrace f_{1/k}:k\geq1\right\rbrace$ in the closed subspace $\mathcal{V}$ of $L^{2}(0,1]$, consisting of the functions almost everywhere constant on the sub-intervals $(\frac{1}{n+1},\frac{1}{n}]$ for $n\geq 1$. Combining these two results  gives a generalization of  Baéz-Duarte's original theorem in $L^2(0,1]$.

\begin{thm}\label{GeneralizatioBD2}
	The RH is true if and only if $\chi_{(\frac{1}{m},\frac{1}{n}]}$ belongs to the closed linear span of $\left\lbrace f_{1/k}:k\geq2\right\rbrace$ in $L^{2}(0,1]$ for any $n,m\in\mathbb{N}$ with $n<m\leq\infty$.
\end{thm}

Now consider the unitary operator $\Phi=T^{-1}\circ\Psi: \mathcal{V}\rightarrow H^{2}$, where $\Psi:\mathcal{V}\rightarrow \mathcal{A}$ and $T:H^{2}\rightarrow \mathcal{A}$ are unitary operators defined by
\[
	(\Psi f)(z)=\sum_{k=0}^{\infty}f\left(\frac{1}{k+1}\right) z^{k} \ \ \mathrm{and} \ \ 	(Th)(z)=\dfrac{((1-z)h(z))'}{1-z}
\]
and $\mathcal{A}$ is a particular weighted Bergman space (see \cite{Waleed}). Transferring Theorem \ref{GeneralizatioBD2} with $m=\infty$ to the Hardy space $H^2$ via the map $\Phi$ gives the desired polynomials that are non-cyclic for $\mathcal{W}$ by Proposition \ref{non-cyclicity}. 

\begin{thm}\label{BD general H2}  The RH is true if and only if $1+z+\dots+z^{n}$ belongs to the closure of $\mathcal{N}$ in $H^{2}$ for any $n\in\mathbb{N}$.
	\end{thm}
\begin{proof} This follow by the fact that $\Phi f_{1/k}=h_k$ for all $k\geq 2$ (see \cite{Waleed}) and a simple computation shows $\Phi \chi_{(0,\frac{1}{n+1}]}$ is a scalar multiple of $1+z+\dots+z^{n}$ for all $n\in\mathbb{N}$.  
	\end{proof}

\section*{Acknowledgements}
This work was financed in part by the Coordenação de Aperfeiçoamento de Nivel Superior- Brasil (CAPES)- Finance Code 001. It constitutes a part of the doctoral thesis of the first and third authors, under the supervision of the second author.
\bibliographystyle{amsplain}

\end{document}